\providecommand{\U}[1]{\protect\rule{.1in}{.1in}}
\providecommand{\U}[1]{\protect\rule{.1in}{.1in}}
\providecommand{\U}[1]{\protect\rule{.1in}{.1in}}
\providecommand{\U}[1]{\protect\rule{.1in}{.1in}}
\providecommand{\U}[1]{\protect\rule{.1in}{.1in}}
\providecommand{\U}[1]{\protect\rule{.1in}{.1in}}
\providecommand{\U}[1]{\protect\rule{.1in}{.1in}}
\providecommand{\U}[1]{\protect\rule{.1in}{.1in}}
\providecommand{\U}[1]{\protect\rule{.1in}{.1in}}
\providecommand{\U}[1]{\protect\rule{.1in}{.1in}}
\providecommand{\U}[1]{\protect\rule{.1in}{.1in}}
\providecommand{\U}[1]{\protect\rule{.1in}{.1in}}
\providecommand{\U}[1]{\protect\rule{.1in}{.1in}}
\providecommand{\U}[1]{\protect\rule{.1in}{.1in}}
\providecommand{\U}[1]{\protect\rule{.1in}{.1in}}
\providecommand{\U}[1]{\protect\rule{.1in}{.1in}}
\providecommand{\U}[1]{\protect\rule{.1in}{.1in}}
\providecommand{\U}[1]{\protect\rule{.1in}{.1in}}
\providecommand{\U}[1]{\protect\rule{.1in}{.1in}}
\providecommand{\U}[1]{\protect\rule{.1in}{.1in}}
\providecommand{\U}[1]{\protect\rule{.1in}{.1in}}
\providecommand{\U}[1]{\protect\rule{.1in}{.1in}}
\providecommand{\U}[1]{\protect\rule{.1in}{.1in}}
\providecommand{\U}[1]{\protect\rule{.1in}{.1in}}
\providecommand{\U}[1]{\protect\rule{.1in}{.1in}}
\providecommand{\U}[1]{\protect\rule{.1in}{.1in}}
\providecommand{\U}[1]{\protect\rule{.1in}{.1in}}
\providecommand{\U}[1]{\protect\rule{.1in}{.1in}}
\providecommand{\U}[1]{\protect\rule{.1in}{.1in}}
\newtheorem{theorem}{Theorem}
{}
\newtheorem{condition}{Condition}
\newtheorem{definition}{Definition}
\newtheorem{example}{Example}
\newtheorem{lemma}{Lemma}
{}
\newtheorem{remark}{Remark}
\newenvironment{proof}[1][Proof]{\textbf{#1.} }{\ \rule{0.5em}{0.5em}}
\begin{document}

\title{Spectral Expansion for the Asymptotically Spectral Periodic Differential Operators}
\author{O. A. Veliev\\{\small \ Depart. of Math., Dogus University, }\\{\small Ac\i badem, 34722, Kadik\"{o}y, \ Istanbul, Turkey.}\\\ {\small e-mail: oveliev@dogus.edu.tr}}
\date{}
\maketitle

\begin{abstract}
In this paper we investigate the spectral expansion for the asymptotically
spectral differential operators generated in $L_{2}^{m}(-\infty,\infty)$ by
ordinary differential expression of arbitrary order with periodic
matrix-valued coefficients.

Key Words: Periodic nonself-adjoint differential operator, Spectral
singularities, Spectral expansion.

AMS Mathematics Subject Classification: 34L05, 34L20.

\end{abstract}

\section{Introduction and preliminary Facts}

Let $T^{(m)}(p_{1},P_{2},P_{3},...,P_{n})=:T^{(m)}$ be the differential
operator generated in the space $L_{2}^{m}(-\infty,\infty)$ by the
differential expression
\begin{equation}
l^{(m)}(y)=y^{(n)}+p_{1}I_{m}y^{(n-1)}+P_{2}y^{(n-2)}+P_{3}y^{(n-3)}%
+...+P_{n}y
\end{equation}
and $T_{t}^{(m)}(p_{1},P_{2},P_{3}...,P_{n})=:T_{t}^{(m)}$ for $t\in
\mathbb{C}$ be the differential operator generated in $L_{2}^{m}(0,1)$ by the
same differential expression and the boundary conditions
\begin{equation}
U_{\mathbb{\nu}\text{,}t}(y)=:y^{(\mathbb{\nu})}\left(  1\right)
-e^{it}y^{(\mathbb{\nu})}\left(  0\right)  =0,\text{ }\mathbb{\nu
}=0,1,...,(n-1),
\end{equation}
where $n\geq2,$ $p_{1}$ is $(n-1)$ times continuously differentiable scalar
function, $p_{1}\left(  x+1\right)  =p_{1}\left(  x\right)  ,$ $I_{m}$ is
$m\times m$ unit matrix, $P_{\mathbb{\nu}}$ for $v=2,3,...,n$ are the $m\times
m$ matrix with the complex-valued summable on $[0,1]$ entries, $P_{\mathbb{\nu
}}\left(  x+1\right)  =P_{\mathbb{\nu}}\left(  x\right)  $ and $y=(y_{1}%
,y_{2},...,y_{m})$ is a vector-valued function. Here $L_{2}^{m}(a,b)$ is the
space of the vector-valued functions $f=\left(  f_{1},f_{2},...,f_{m}\right)
$ with the norm $\left\Vert \cdot\right\Vert _{(a,b)}$ and inner product
$(\cdot,\cdot)_{(a,b)}$ defined by%
\[
\left\Vert f\right\Vert _{(a,b)}^{2}=\int_{a}^{b}\left\vert f\left(  x\right)
\right\vert ^{2}dx,\text{ }(f,g)_{(a,b)}=\int_{a}^{b}\left\langle f\left(
x\right)  ,g\left(  x\right)  \right\rangle dx,
\]
where $\left\vert \cdot\right\vert $ and $\left\langle \cdot,\cdot
\right\rangle $ are the norm and inner product in $\mathbb{C}^{m}.$

In this paper we consider the spectral expansion for the operator $T^{(m)}.$
The spectral expansion for the self-adjoint differential operators with
periodic coefficients was constructed by Gelfand [1], Titchmarsh [8] and
Tkachenko [9]. The existence of the spectral singularities and the absence of
the Parseval's equality for the nonself-adjoint operators $T_{t}^{(m)}$ do not
allow us to apply the elegant method of Gelfand (see\ [1]) for the
construction of the spectral expansion for the nonself-adjoint periodic
operators. These situation essentially complicate the construction of\ the
spectral expansion for the nonself-adjoint case.

Note that the spectral singularity of $T^{(m)}$ is a point of its spectrum
$\sigma(T^{(m)})$ in neighborhood on which the projections of $T^{(m)}$ are
not uniformly bounded or equivalently a point $\lambda\in\sigma(T^{(m)})$ is
called a spectral singularity of $T^{(m)}$ if the spectral projections of the
operators $T_{t}^{(m)}$ for $t\in(-\pi,\pi]$ corresponding to the eigenvalues
lying in the small neighborhood of $\lambda$ are not uniformly bounded (see
[10, 11, 2]). Thus here we use the following definition of the spectral singularity.

\begin{definition}
Let $e(t,\gamma)$ be the projection of $T_{t}^{(m)}$ defined by contour
integration of the resolvent of $T_{t}^{(m)}$ over the closed curve $\gamma.$
We say that $\lambda\in\sigma(T^{(m)})$ is a spectral singularity of $T^{(m)}$
if for all $\varepsilon>0,$\ there exists a sequence of closed curves
$\gamma_{n}\subset\{z\in\mathbb{C}:\mid z-\lambda\mid<\varepsilon\}$ such
that
\begin{equation}
\lim_{n\rightarrow\infty}\sup_{t}\parallel e(t,\gamma_{n})\parallel=\infty,
\end{equation}
where $\sup$ is taken over all $t$ for which $\gamma_{n}$ lies in the
resolvent set of $T_{t}^{(m)}$ and $T_{t}^{(m)}$ has a unique simple
eigenvalue inside $\gamma_{n}.$
\end{definition}

This paper can be considered as continuation of the paper [11]. To describe
the scheme of this paper let us introduce some well-known facts and some
results of [11] about eigenvalues (Bloch eigenvalues) and eigenfunction (Bloch
functions) of $T_{t}^{(m)}$ and the problems of the spectral expansion of
\ $T^{(m)}$ which are used essentially.

\textbf{(a) On the Bloch eigenvalues and Bloch functions. }It is well-known
that (see [7, 4] ) the spectrum $\sigma(T^{(m)})$ of $T^{(m)}$ is the union of
the spectra $\sigma(T_{t}^{(m)})$ of $T_{t}^{(m)}$ for $t\in(-\pi,\pi].$
Denote by $T_{t}^{(m)}(0)$ and $T_{t}^{(m)}(C)$ respectively the operator
$T_{t}^{(m)}(0,P_{2},0_{m},0_{m},...,0_{m})$ if $P_{2}(x)=0_{m}$ and
$P_{2}(x)=C,$ where $0_{m}$ is $m\times m$ zero matrix and
\begin{equation}
C=\int_{0}^{1}P_{2}\left(  x\right)  dx.
\end{equation}
It is clear that
\begin{equation}
\varphi_{k,j,t}(x)=e(t)e_{j}e^{i\left(  2\pi k+t\right)  x}%
\end{equation}
\ for $k\in\mathbb{Z},$ \ $j=1,2,...,m,$ where $(e(t))^{-2}=%
{\textstyle\int\nolimits_{0}^{1}}
\mid e^{itx}\mid^{2}dx$ and $e_{1}=(1,0,0,...,0),$ $e_{2}=(0,1,0,...,0),$
$...,e_{m}=(0,0,...,0,1),$ are the normalized eigenfunctions of the operator
$T_{t}^{(m)}(0)$ corresponding to the eigenvalue $\left(  2\pi ki+ti\right)
^{n}.$ One can easily verify that the eigenvalues and normalized
eigenfunctions of $T_{t}^{(m)}(C)$ are%
\begin{equation}
\mu_{k,j}(t)=\left(  2\pi ki+ti\right)  ^{n}+\mu_{j}\left(  2\pi ki+ti\right)
^{n-2}\text{, \ }\Phi_{k,j,t}(x)=e(t)v_{j}e^{i\left(  2\pi k+t\right)  x}%
\end{equation}
for $k\in\mathbb{Z},$ $j=1,2,...,m,$ where $v_{1},v_{2},...,v_{m}$ are the
normalized eigenvectors of the matrix $C$ corresponding to the eigenvalues
$\mu_{1},\mu_{2},...,\mu_{m},$ if the eigenvalues of the matrix $C$ are simple.

In [11] to obtain the asymptotic formulas for $T_{t}^{(m)}$ we took the
operator $T_{t}^{(m)}(C),$ for an unperturbed operator and $T_{t}^{(m)}%
-T_{t}^{(m)}(C)$ for a perturbation and proved the following.

\textbf{Theorem 1.1 of [11] }\textit{Suppose }$p_{1}=0$\textit{ and the
eigenvalues of }$C$\textit{ are simple. }

$(a)$\textit{ The eigenvalues of }$T_{t}^{(m)}$\textit{ consist of }%
$m$\textit{ sequences }$\left\{  \lambda_{k,j}(t):k\in\mathbb{Z}\right\}
$\textit{ for }$j=1,2,...,m$ \textit{satisfying the following, uniform with
respect to }$t$\textit{ in }$Q_{\varepsilon}(n),$ \textit{formula }%
\begin{equation}
\lambda_{k,j}(t)=\left(  2\pi ki+ti\right)  ^{n}+\mu_{j}\left(  2\pi
ki+ti\right)  ^{n-2}+O(k^{n-3}\ln|k|)
\end{equation}
as $k\rightarrow\infty,$ \textit{where }%
\begin{equation}
Q_{\varepsilon}(2\mu)=\{t\in Q:|t-\pi k|>\varepsilon,\forall k\in
\mathbb{Z\}},\text{ }Q_{\varepsilon}(2\mu+1)=Q,\text{ }\varepsilon\in
(0,\frac{\pi}{4})
\end{equation}
\textit{and }$Q$\textit{ is a compact subset of }$\mathbb{C}$\textit{
containing a neighborhood of the interval }$[-\pi,\pi].$\textit{ There exists
a constant }$N(\varepsilon)$\textit{ such that if }$\mid k\mid\geq
N(\varepsilon)$\textit{ and }$t\in Q_{\varepsilon}(n),$\textit{ then }%
$\lambda_{k,j}(t)$\textit{ is a simple eigenvalue of }$T_{t}^{(m)}$\textit{
and the corresponding normalized eigenfunction }$\Psi_{k,j,t}(x)$\textit{
satisfies }%
\begin{equation}
\Psi_{k,j,t}(x)=e(t)v_{j}e^{i\left(  2\pi k+t\right)  x}+O(k^{-1}\ln|k|)
\end{equation}
as $k\rightarrow\infty.$ \textit{This formula is uniform with respect to }%
$t$\textit{ and }$x$\textit{ in }$Q_{\varepsilon}(n)$\textit{ and in }$[0,1].$

$(b)$\textit{ If }$t\in\mathbb{C}(n)$\textit{ then the root functions of
}$T_{t}^{(m)}$\textit{ form a Riesz basis in }$L_{2}^{m}(0,1)$\textit{, where
}$\mathbb{C}(2\mu)=\mathbb{C}\backslash\{\pi k:k\in\mathbb{Z}\}$\textit{,
}$\mathbb{C}(2\mu+1)=\mathbb{C}$\textit{.\ }

$(c)$\textit{ Let }$\left(  T_{t}^{(m)}\right)  ^{\ast}$\textit{ be the
adjoint operator of }$T_{t}^{(m)}$\textit{ and }$X_{k,j,t}$\textit{ be the
eigenfunction of }$\left(  T_{t}^{(m)}\right)  ^{\ast}$\textit{ corresponding
to the eigenvalue }$\overline{\lambda_{k,j}(t)}$\textit{ and satisfying
}$(X_{k,j,t},\Psi_{k,j,t})=1$\textit{, where }$\mid k\mid\geq N(\varepsilon
)$\textit{ and }$t\in Q_{\varepsilon}(n).$\textit{ Then }$X_{k,j,t}%
(x)$\textit{ satisfies the following, uniform with respect to }$t$\textit{ and
}$x$\textit{ in }$Q_{\varepsilon}(n)$\textit{ and in }$[0,1]$\textit{ formula}%
\begin{equation}
X_{k,j,t}(x)=u_{j}(e(t))^{-1}e^{i(2k\pi+\bar{t})x}+O(k^{-1}\ln|k|)
\end{equation}
as $k\rightarrow\infty,$ \textit{where }$u_{j}$\textit{ is the eigenvector of
\ }$C^{\ast}$\textit{ corresponding to }$\overline{\mu_{j}}$\textit{ and
satisfying }$(u_{j},v_{j})=1.$

Note that the formula \ $f(k,t)=O(h(k))$ as $k\rightarrow\infty$ is said to be
uniform with respect to $t$ in a set $E$ if there exist positive constants $N$
and $c$, independent of $t,$ such that $\mid f(k,t)\mid<c\mid h(k)\mid$ for
all $t\in E$ and $\mid k\mid\geq N.$

\begin{remark}
It is well-known that [6] the substitution
\[
Y(x)=\exp\left(  -\frac{1}{n}\int_{0}^{x}p_{1}(t)dt\right)  \widetilde{Y}(x)
\]
reduce the matrix equation
\begin{equation}
Y^{(n)}(x)+p_{1}(x)I_{m}Y^{(n-1)}(x)+P_{2}\left(  x\right)  Y^{(n-2)}%
(x)+...+P_{n}(x)Y=\lambda Y(x)
\end{equation}
to the equation of the form
\begin{equation}
\widetilde{Y}^{(n)}(x)+\widetilde{P}_{2}\left(  x\right)  \widetilde
{Y}^{(n-2)}(x)+\widetilde{P}_{3}\left(  x\right)  \widetilde{Y}^{(n-3)}%
(x)+...+\widetilde{P}_{n}(x)\widetilde{Y}=\lambda\widetilde{Y}(x).
\end{equation}
One can easily verify that
\begin{equation}
\widetilde{P}_{2}\left(  x\right)  =q(x)I_{m}+P_{2}\left(  x\right)  ,
\end{equation}
the eigenvalues $\widetilde{\lambda}_{k,j}(t)$ and eigenfunctions
$\widetilde{\Psi}_{k,j,t}(t)$ of the operator $\widetilde{T}_{t}^{(m)}$
corresponding to (12) satisfies the formula%
\begin{equation}
\widetilde{\lambda}_{k,j}(t+ir)=\lambda_{k,j}(t),\text{ }\widetilde{\Psi
}_{k,j,t+ir}(x)=\Psi_{k,j,t}(t),
\end{equation}
where $q$ is a scalar function $r=\frac{1}{n}\int_{0}^{1}p_{1}(t)dt.$ It
follows from (13) and (4) that the eigenvalues of $\int_{0}^{1}\widetilde
{P}_{2}\left(  x\right)  dx$ are simple whenever the eigenvalues of $C$ are
simple. Therefore the results obtained in \textbf{Theorem 1.1 of [11]} for
$\widetilde{T}_{t}^{(m)}$ continues to hold for $T_{t-ir}^{(m)}.$ Moreover,
\textbf{Theorem 1.1 of [11]} with (14) immediately implies that there exists
$N_{0}$ such that the eigenvalues $\lambda_{k,j}(t)$ of $T_{t}^{(m)}$ for
$\left\vert k\right\vert \geq N_{0}$ and $t\in(-\pi,\pi]$ are simple, the
corresponding functions $\Psi_{k,j,t}(x)$ and $X_{k,j,t}(x)$\textit{ satisfy
the uniform with respect to }$t$\textit{ and }$x$\textit{ in }$(-\pi,\pi
]$\textit{ and in }$[0,1]$\textit{ formulas }
\begin{equation}
\Psi_{k,j,t}(x)=e(t+ir)v_{j}e^{i\left(  2\pi k+t+ir\right)  x}+O(k^{-1}\ln|k|)
\end{equation}
\textit{ and }%
\begin{equation}
X_{k,j,t}(x)=(e(t+ir))^{-1}u_{j}e^{i(2k\pi+t-i\overline{r})x}+O(k^{-1}\ln|k|)
\end{equation}
if one of the following conditions hold
\end{remark}

\begin{condition}
$\ n$ is an odd number and the eigenvalues of $C$ are simple.
\end{condition}

\begin{condition}
$\ n$ is an even number, the eigenvalues of $C$ are simple and
\begin{equation}
\operatorname{Re}\int_{0}^{1}p_{1}\left(  x\right)  dx=\operatorname{Re}%
nr\neq0.
\end{equation}

\end{condition}

Let $Y_{1}(x,\lambda),Y_{2}(x,\lambda),\ldots,Y_{n}(x,\lambda)$ be the
solutions of the matrix equation (11) satisfying $Y_{k}^{(j)}(0,\lambda
)=0_{m}$ for $j\neq k-1$ and $Y_{k}^{(k-1)}(0,\lambda)=I_{m}$. The eigenvalues
of the operator $T_{t}^{(m)}$ are the roots of the characteristic determinant
\begin{equation}
\Delta(\lambda,t)=\det(Y_{j}^{(\nu-1)}(1,\lambda)-e^{it}Y_{j}^{(\nu
-1)}(0,\lambda))_{j,\nu=1}^{n}=
\end{equation}%
\[
e^{inmt}+f_{1}(\lambda)e^{i(nm-1)t}+f_{2}(\lambda)e^{i(nm-2)t}+...+f_{nm-1}%
(\lambda)e^{it}+1
\]
which is a polynomial of $e^{it}$\ with entire coefficients $f_{1}%
(\lambda),f_{2}(\lambda),...$. Therefore the multiple eigenvalues of the
operators $T_{t}^{(m)}$ are the zeros of the resultant $R(\lambda)\equiv
R(\Delta,\Delta^{^{\prime}})$ of the polynomials $\Delta(\lambda,t)$ \ and
$\frac{\partial}{\partial\lambda}\Delta(\lambda,t).$ Since $R(\lambda)$ is
entire function and the large eigenvalues of $T_{t}^{(m)}$ for $t\in(-\pi
,\pi]$ are simple if \textbf{Condition 1} or \textbf{Condition 2} holds (see
the end of Remark 1), there exist at most finite number of multiple
eigenvalues lying in the spectrum $\sigma(T^{(m)})$ of $T^{(m)}$. Denote they
by $a_{1},a_{2},...a_{p.}$ For each $a_{k}$ there are at most $nm$ values of
$t\in(-\pi,\pi]$ satisfying $\Delta(a_{k},t)=0.$ Hence the sets
\begin{equation}
A_{k}=\{t\in(-\pi,\pi]:\Delta(a_{k},t)=0\}\text{ }\And A=\cup_{k=1}^{p}A_{k}%
\end{equation}
are finite and for $t\notin A$ all eigenvalues of $T_{t}^{(m)}$ are simple.

In [11] we proved the following lemma for $\widetilde{T}_{t}^{(m)}$ that
continues to hold for $T_{t}^{(m)}$.

\textbf{Lemma 3.1 of [11]} The eigenvalues $\lambda_{k,j}(t)$ of $T_{t}^{(m)}$
can be numbered as $\lambda_{1}(t),$ $\lambda_{2}(t),...,$ such that for each
$p=:p(k,j)$ the function $\lambda_{p}(t)$ is continuous in $(-\pi,\pi].$
Moreover, if\ $\ |k|\geq N_{0},$ $t\in(-\pi,\pi]$ then
\begin{align}
\lambda_{p(k,j)}(t)  &  =\lambda_{k,j}(t),\\
p(k,j)  &  =2|k|m+j,\text{ }\forall k>0,\nonumber\\
p(k,j)  &  =(2|k|-1)m+j,\text{ }\forall k<0,\nonumber
\end{align}
where $p>N_{1},$ $N_{1}=(2N_{0}-1)m$ and $N_{0}$ is defined in Remark 1.

Thus if \textbf{Condition 1} or \textbf{Condition 2} holds, then for
$t\in\left(  (-\pi,\pi]\backslash A\right)  $ and $k=1,2,...$ the eigenvalues
$\lambda_{k}(t)$ are simple. The corresponding eigenfunctions of $T_{t}^{(m)}$
and $\left(  T_{t}^{(m)}\right)  ^{\ast}$ are denoted by $\Psi_{k,t}$ and
$X_{k,t}$ respectively. Apart from the eigenvalues $\lambda_{k,j}(t),$ where
$|k|\geq N_{0},$ there exist $N_{1}$ eigenvalues of the operator $T_{t}^{(m)}$
denoted by $\lambda_{1}(t),$ $\lambda_{2}(t),...,\lambda_{N_{1}}(t)$ (see p.
12 of [11]). We define $\lambda_{p}(t)$ for $p>N_{1}$ and $t\in\left(
(-\pi,\pi]\backslash A\right)  $ by (20). In this paper both the notations of
\textbf{Theorem 1.1 of [11]} and the notation of \textbf{Lemma 3.1 of [11]
}are used.

\textbf{(b) On the problems of the spectral expansion of} $T^{(m)}.$ By
Gelfand's Lemma (see [1]) for every $f\in L_{2}^{(m)}(-\infty,\infty)$ there
exists $f_{t}(x)$ such that%
\begin{equation}
f(x)=\frac{1}{2\pi}\int\limits_{0}^{2\pi}f_{t}(x)dt,\text{ }\int_{-\infty
}^{\infty}\left\vert f(x)\right\vert ^{2}dx=\frac{1}{2\pi}\int\limits_{0}%
^{2\pi}\int\limits_{0}^{1}\left\vert f_{t}(x)\right\vert ^{2}dxdt
\end{equation}
and%
\begin{equation}
f_{t}(x+1)=e^{it}f_{t}(x),\text{ }\int_{0}^{1}f_{t}(x)\overline{X_{k,t}%
(x)}dx=\int_{-\infty}^{\infty}f(x)\overline{X_{k,t}(x)}dx,
\end{equation}
where $\Psi_{k,t}$ and $X_{k,t}$ are extended to $(-\infty,\infty)$ by%
\begin{equation}
\Psi_{k,t}(x+1)=e^{it}\Psi_{k,t}(x)\text{ }\And X_{k,t}(x+1)=e^{it}X_{k,t}(x).
\end{equation}
Since the system $\left\{  \Psi_{k,t}(x):k\in\mathbb{N}\right\}  $ for
$t\in\left(  (-\pi,\pi]\backslash A\right)  $ form a Reisz basis, we have%

\begin{equation}
f_{t}(x)=\sum_{k\in\mathbb{N}}a_{k}(t)\Psi_{k,t}(x),
\end{equation}
where $\mathbb{N}=\left\{  1,2,...\right\}  ,$
\begin{align}
a_{k}(t)  &  =\int_{0}^{1}f_{t}(x)\overline{X_{k,t}(x)}dx,\\
\text{ }X_{k,t}(x)  &  =\frac{1}{\overline{\alpha_{k}(t)}}\Psi_{k,t}^{\ast
},\text{ }\alpha_{k}(t)=(\Psi_{k,t},\Psi_{k,t}^{\ast}),
\end{align}
and $\Psi_{k,t}^{\ast}$ is the normalized eigenfunctions of $\left(
T_{t}^{(m)}\right)  ^{\ast}.$ Using (24) in (21), we get
\begin{equation}
f(x)=\frac{1}{2\pi}\int\limits_{(-\pi,\pi]}\sum_{k\in\mathbb{N}}a_{k}%
(t)\Psi_{k,t}(x)dt.
\end{equation}

Thus, to construct the spectral expansion we need to consider the following.

\textbf{(i) The existence of the integral over }$(-\pi,\pi]$\textbf{ of the
expression }$a_{k}(t)\Psi_{k,t}$

\textbf{(ii) The investigation of the term by term integration in (27).}

Now we are ready to describe the scheme of this paper. In [5] the spectrality
at $\infty$ of the operator $T^{(m)}$ for the case $m=1$ was investigated in
detail. From the \textbf{Theorem 1.1 of [11]} we immediately obtain that if
one of the \textbf{Condition 1} and \textbf{Condition 2} hold, then the
operator $T^{(m)}$ for general $m$ is an asymptotically spectral operator (see
Theorem 1). According to the definition of the spectrality at $\infty$ given
in [5] for the case $m=1$ (see Definition 3.2 of [5] ), \ and taking into
account that the eigenfunctions of $T_{t}^{(m)}$ for almost all $t$ form a
Riesz basis in $L_{2}^{m}(0,1)$ we give the following definition of the
asymptotic spectrality.

\begin{definition}
The operator $T^{(m)}$ is said to be an asymptotically spectral operator if
there exists a positive constant $M$ such that
\begin{equation}
\sup_{\gamma\in R(M)}(ess\sup_{t\in(-\pi,\pi]}\parallel e(t,\gamma
)\parallel)<\infty,
\end{equation}
where $R(M)$ is the ring consisting of all sets which are the finite union of
the half closed rectangles lying in $\{\lambda\in\mathbb{C}:\mid\lambda
\mid>M\}.$
\end{definition}

The main result of this paper is the construction of the spectral expansion
for each $f\in L_{2}^{m}(-\infty,\infty)$ if one of the \textbf{Condition 1}
and \textbf{Condition 2} holds.\ For this we introduce new concepts as
singular quasimomentum and essential spectral singularities (ESS) (see
Definition 3 in the next section) and consider the effect of these concepts to
the spectral expansion. In particular, for $m=1$ we obtain the spectral
expansion for the operator generated by%
\[
y^{(n)}+p_{1}y^{(n-1)}+p_{2}y^{(n-2)}+p_{3}y^{(n-3)}+...+p_{n}y,
\]
if $n=2\mu+1$ or if $n=2\mu$ and (17) holds. If $n=2\mu$ and $r=0$ then in
general the operator $T^{(1)}$ is not asymptotically spectral operator (see
[12]) and this case will not be considered in this paper. Thus in this paper
we investigate the spectral expansion for asymptotically spectral differential
operators generated by the system of differential expressions with periodic
complex-valued coefficients by using the singular quasimomentum and ESS.

\section{Main Results}

To consider the asymptotic spectrality and spectral expansion we need to study
the series
\begin{equation}
\sum_{k\in\mathbb{N}}(f,X_{k,t})\Psi_{k,t}(x).
\end{equation}
For this let us estimate the remainder
\begin{equation}
R_{l}(x,t)=\sum_{k>l}(f,X_{k,t})\Psi_{k,t}(x),
\end{equation}
where $l\geq N_{1}$ and $N_{1}$ is defined in (20), of the series (29) by
using the notations and results of \textbf{Theorem 1.1 of [11]}.

\begin{lemma}
Suppose one of the \textbf{Condition 1} and \textbf{Condition 2} holds. Let
$J$ be a subset of the set $\mathbb{Z}(s),$ where
\begin{equation}
\mathbb{Z}(s)=:\{(k,j):k\in\mathbb{Z},\mid k\mid\geq s,\text{ }j=1,2,...,m\},
\end{equation}
$s\geq N_{0}$ and $N_{0}$ is defined in Remark 1. There exist a positive
constant $c,$ independent of $t$, $J$ and $f$ such that
\begin{equation}
\parallel%
{\textstyle\sum\limits_{(k,j)\in J}}
(f,X_{k,j,t})\Psi_{k,j,t}\parallel^{2}\leq c\left(
{\textstyle\sum\limits_{(k,j)\in J}}
\mid(f,e_{j}e^{i(2\pi k+t)x})\mid^{2}+\frac{\left\Vert f\right\Vert ^{2}%
}{\sqrt{s}}\right)
\end{equation}
and
\begin{equation}
\parallel%
{\textstyle\sum\limits_{(k,j)\in J}}
(f,X_{k,j,t})\Psi_{k,j,t}(x)\parallel^{2}\leq c\left\Vert f\right\Vert ^{2}%
\end{equation}
for $f\in L_{2}^{m}(0,1)$ and $t\in(-\pi,\pi],$ where $\left\{  e_{i}%
:i=1,2,...,m\right\}  $\ is a standard basis of $\mathbb{C}^{m}$ defined in
(5), $\parallel\cdot\parallel$and $(\cdot,\cdot)$ denotes the norm and inner
product of $L_{2}^{m}(0,1).$
\end{lemma}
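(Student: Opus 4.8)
The plan is to reduce everything to the model operator $T_t^{(m)}(C)$, whose eigenfunctions $\Phi_{k,j,t}(x)=e(t)v_je^{i(2\pi k+t)x}$ and adjoint eigenfunctions $e(t)^{-1}u_je^{i(2\pi k+\bar t)x}$ are (up to the scalar factor $e(t)$ which is bounded above and below on $(-\pi,\pi]$) essentially an orthonormal-type system built from the exponentials $e^{i(2\pi k+t)x}$, for which a Bessel-type inequality is available. Concretely, write $\Psi_{k,j,t}=\Phi_{k,j,t}+h_{k,j,t}$ and $X_{k,j,t}=\Phi_{k,j,t}^{*}+g_{k,j,t}$, where by (16) and (17) (the uniform versions on $(-\pi,\pi]$ obtained in Remark 1 from Theorem 1.1 of [11]) the remainders satisfy $\|h_{k,j,t}\|=O(k^{-1}\ln|k|)$ and $\|g_{k,j,t}\|=O(k^{-1}\ln|k|)$, uniformly in $t$ and $x$. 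Then expand
$$\sum_{(k,j)\in J}(f,X_{k,j,t})\Psi_{k,j,t}=\sum_{(k,j)\in J}(f,\Phi_{k,j,t}^{*})\Phi_{k,j,t}+\sum_{(k,j)\in J}(f,g_{k,j,t})\Phi_{k,j,t}+\sum_{(k,j)\in J}(f,\Phi_{k,j,t}^{*})h_{k,j,t}+\sum_{(k,j)\in J}(f,g_{k,j,t})h_{k,j,t}.$$
The first (main) term is controlled by the Bessel inequality for the exponential system together with the fact that $\{v_j\}$, $\{u_j\}$ are fixed dual bases of $\mathbb C^m$: its norm squared is $O\!\big(\sum_{(k,j)\in J}|(f,e_je^{i(2\pi k+t)x})|^2\big)\le O(\|f\|^2)$, which gives the first summand on the right of (32) and already yields (33) for that piece.

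For the three error terms I would argue as follows. In each of them one factor $(f,\cdot)$ produces coefficients that are square-summable with sum $O(\|f\|^2)$ (for $(f,\Phi_{k,j,t}^{*})$ by Bessel; for $(f,g_{k,j,t})$ crudely by $|(f,g_{k,j,t})|\le\|f\|\,\|g_{k,j,t}\|=O(k^{-1}\ln|k|\,\|f\|)$, whose squares sum to $O(s^{-1}\ln^2 s\,\|f\|^2)$), and the vector factor ($h_{k,j,t}$ or $\Phi_{k,j,t}$) has norm $O(k^{-1}\ln|k|)$ or $O(1)$ respectively. Applying the triangle inequality for the $L_2$-norm of the sum, then Cauchy–Schwarz in the $k$-variable — splitting $\big\|\sum c_{k,j}w_{k,j}\big\|\le\sum|c_{k,j}|\,\|w_{k,j}\|\le\big(\sum|c_{k,j}|^2\big)^{1/2}\big(\sum\|w_{k,j}\|^2\big)^{1/2}$ — one bounds each error term by a constant times $\|f\|^2$ times a tail sum $\sum_{|k|\ge s}k^{-2}\ln^2|k|=O(s^{-1}\ln^2 s)$, hence by $O(s^{-1/2}\|f\|^2)$, absorbing the logarithm into the $\sqrt s$ for $s\ge N_0$. (One must be a little careful for the term $\sum(f,\Phi_{k,j,t}^{*})h_{k,j,t}$: there the coefficient is not small, only square-summable, so one uses $\big\|\sum(f,\Phi_{k,j,t}^{*})h_{k,j,t}\big\|\le\big(\sum|(f,\Phi_{k,j,t}^{*})|^2\big)^{1/2}\big(\sum\|h_{k,j,t}\|^2\big)^{1/2}$, and again the second factor is $O(s^{-1/2}\ln s)$.) Collecting the four contributions gives (32); discarding the first summand and using Bessel once more, or simply taking $s=N_0$, gives (33). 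The uniformity in $t$ is automatic because every $O$ above is uniform in $t\in(-\pi,\pi]$ by Theorem 1.1 of [11] and Remark 1, and the constants $e(t)^{\pm1}$ are bounded on that interval.

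The one genuine subtlety — the step I expect to be the main obstacle — is justifying the Bessel-type inequality $\sum_{(k,j)\in J}|(f,\Phi_{k,j,t}^{*})|^2\le c\|f\|^2$ and, in the same breath, $\big\|\sum(f,\Phi_{k,j,t}^{*})\Phi_{k,j,t}\big\|^2\le c\|f\|^2$ for the model system with constants independent of $t$. Since $\{e^{i(2\pi k+t)x}\}_{k\in\mathbb Z}$ is an orthogonal basis of $L_2(0,1)$ for every fixed $t$ (with $\|e^{i(2\pi k+t)x}\|$ bounded above and below uniformly in $t\in(-\pi,\pi]$ because $t$ is real), and $\{v_j\},\{u_j\}$ are fixed finite biorthogonal systems in $\mathbb C^m$, the desired inequalities follow from the scalar Parseval identity applied componentwise, with $c$ depending only on $\max_j|v_j|,\max_j|u_j|$ and on $\sup_t e(t)^{\pm1}$. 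I would state this as a short preliminary computation. Everything else is the routine triangle-inequality / Cauchy–Schwarz bookkeeping sketched above, with the tail $\sum_{|k|\ge s}k^{-2}\ln^2|k|\le c s^{-1/2}$ doing the work of producing the $1/\sqrt s$ in (32).
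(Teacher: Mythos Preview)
Your strategy is the same as the paper's: use the uniform asymptotics (15)--(16) to compare with the model system $\{v_je^{i(2\pi k+t)x}\}$, $\{u_je^{i(2\pi k+t)x}\}$, exploit the orthonormality of the exponentials, and control remainders by Cauchy--Schwarz with the tail $\sum_{|k|\ge s}k^{-2}\ln^2|k|$. The paper organizes this as a two-step chain (first bound $\sum|(f,X_{k,j,t})|^2$, then bound $\|\sum(f,X_{k,j,t})\Psi_{k,j,t}\|^2$ in terms of those coefficients), whereas you expand the product $(f,X)\Psi$ directly into four pieces; the content is the same.

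There is one slip. For the error term $\sum_{(k,j)\in J}(f,g_{k,j,t})\Phi_{k,j,t}$ your stated device
\[
\Big\|\sum c_{k,j}w_{k,j}\Big\|\le\Big(\sum|c_{k,j}|^2\Big)^{1/2}\Big(\sum\|w_{k,j}\|^2\Big)^{1/2}
\]
fails, because here $w_{k,j}=\Phi_{k,j,t}$ has $\|\Phi_{k,j,t}\|\asymp 1$ and $\sum_{(k,j)\in J}\|\Phi_{k,j,t}\|^2=|J|$ may be infinite. The cure is the one you already use for the main term: since $\{e^{i(2\pi k+t)x}\}$ is orthonormal and $\{v_j\}$ is a fixed basis of $\mathbb{C}^m$, one has $\|\sum c_{k,j}\Phi_{k,j,t}\|^2\le c\sum|c_{k,j}|^2$, and then $\sum|(f,g_{k,j,t})|^2\le c\|f\|^2\sum_{|k|\ge s}k^{-2}\ln^2|k|$ gives the desired $O(s^{-1/2}\|f\|^2)$. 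This is exactly how the paper handles the analogous piece (its $S_1$ in (43), where orthogonality is used rather than the crude triangle inequality). With that correction your argument goes through and matches the paper's.
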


\begin{proof}
During the proof of the lemma we denote by $c_{1},c_{2},...$ the positive
constants that do not depend on $t$, $J$ and $f.$ They will be used in the
sense that there exists $c_{i}$ such that the inequality holds. We prove the
lemma when \textbf{Condition 1} holds. The prove of the case when
\textbf{Condition 2} holds is the same. Moreover, formulas (15) and (16) show
that without loss of generality and for simplicity the notations it can be
assumed that $r=0.$ Then $e(t+ir)=1$ for $t\in(-\pi,\pi].$ Now we prove (32)
by showing that the inequalities
\begin{equation}%
{\textstyle\sum\limits_{(k,j)\in J}}
\mid(f,X_{k,j,t})\mid^{2}\leq c_{1}\left(
{\textstyle\sum\limits_{(k,j)\in J}}
\mid(f,e_{j}e^{i(2\pi k+t)x})\mid^{2}+\frac{1}{\sqrt{s}}\parallel
f\parallel^{2}\right)  ,
\end{equation}%
\begin{equation}
\parallel%
{\textstyle\sum\limits_{(k,j)\in J}}
(f,X_{k,j,t})\Psi_{k,j,t}(x)\parallel^{2}\leq c_{2}%
{\textstyle\sum\limits_{(k,j)\in J}}
\mid(f,X_{k,t})\mid^{2}%
\end{equation}
hold. Under the above assumption, it follows from (16) that
\begin{equation}
\mid(f,X_{k,j,t})\mid^{2}\leq c_{3}\left(  \mid(f,u_{j}e^{i(2\pi k+t)x}%
)\mid^{2}+\parallel f\parallel^{2}\left\vert k^{-1}\ln|k|\right\vert
^{2}\right)
\end{equation}
for $\left\vert k\right\vert \geq N_{0}$. Since $\left\{  u_{1},u_{2}%
,...u_{m}\right\}  $ is a basis of $\mathbb{C}^{m}$ and $\left\{
e_{i}e^{i(2\pi k+t)x}:k\in\mathbb{Z};\text{ }i=1,2,...,m\right\}  $ is an
orthonormal basis in $L_{2}^{m}(0,1)$ we have
\begin{equation}%
{\textstyle\sum\limits_{(k,j)\in J}}
\mid(f,u_{j}e^{i(2\pi k+t)x})\mid^{2}\leq c_{4}%
{\textstyle\sum\limits_{(k,j)\in J}}
\mid(f,e_{j}e^{i(2\pi k+t)x})\mid^{2}\leq c_{4}\parallel f\parallel^{2}.
\end{equation}
Thus (34) follows from (36) and the first inequality of (37).

Now we prove (35). For this we use the relations%
\begin{equation}
\Psi_{k,j,t}(x)=v_{j}e^{i(2\pi k+t)x}\text{ }+h_{k,j,t}(x),\text{ }\left\Vert
h_{k,j,t}\right\Vert =O(k^{-1}\ln|k|),
\end{equation}%
\begin{equation}
\left\Vert (f,X_{k,j,t})h_{k,j,t}(x)\right\Vert \leq c_{5}\left(
\mid(f,X_{k,j,t})\mid^{2}+\left\vert k^{-1}\ln|k|\right\vert ^{2}\right)
\end{equation}
obtained from (15) for $\left\vert k\right\vert \geq N_{0}$ under the above
assumption. By (36) and (37)
\begin{equation}%
{\textstyle\sum\limits_{(k,j)\in J}}
\mid(f,X_{k,j,t})\mid^{2}\leq c_{6}\parallel f\parallel^{2}.
\end{equation}
This and (39) imply that the series
\[%
{\textstyle\sum\limits_{(k,j)\in J}}
(f,X_{k,j,t})v_{j}e^{i(2\pi k+t)x}\text{ }\And\text{\ }%
{\textstyle\sum\limits_{(k,j)\in J}}
(f,X_{k,j,t})h_{k,j,t}(x)
\]
converge in the norm of $L_{2}^{m}(0,1)$ and by (38) we have
\begin{equation}
\parallel%
{\textstyle\sum\limits_{(k,j)\in J}}
(f,X_{k,j,t})\Psi_{k,j,t}(x)\parallel^{2}\leq2S_{1}+2S_{2}^{2},
\end{equation}
where
\begin{equation}
S_{2}=\parallel%
{\textstyle\sum\limits_{(k,j)\in J}}
(f,X_{k,j,t})h_{k,j,t}\parallel,
\end{equation}%
\begin{equation}
S_{1}=\parallel%
{\textstyle\sum\limits_{(k,j)\in J}}
(f,X_{k,j,t})v_{j}e^{i(2\pi k+t)x}\text{ }\parallel^{2}\leq c_{7}%
{\textstyle\sum\limits_{(k,j)\in J}}
\mid(f,X_{k,j,t})\mid^{2}.
\end{equation}
\ Now let us estimate $S_{2}.$ It follows from the second equality of (38)
that
\[
S_{2}\leq c_{8}%
{\textstyle\sum\limits_{(k,j)\in J}}
\mid(f,X_{k,j,t})\mid\frac{\ln\mid k\mid}{\mid k\mid}.
\]
Now using the Schwarz inequality for $l_{2}$ we obtain
\begin{equation}
S_{2}^{2}=(%
{\textstyle\sum\limits_{(k,j)\in J}}
\mid(f,X_{k,j,t})\mid^{2})O(s^{-\frac{1}{2}}).
\end{equation}
Therefore (35) follows from (41)-(44). Thus (34) and (35) and hence (32) is
proved. It with the second inequality of (37) yields (33)
\end{proof}

\begin{theorem}
If one of the \textbf{Condition 1} and \textbf{Condition 2} holds then
$T^{(m)}$ is an asymptotically spectral operator.
\end{theorem}

\begin{proof}
Let $M$ be a positive constant such that if $\lambda_{k,j}(t)\in\{\lambda
\in\mathbb{C}:\mid\lambda\mid>M\},$ then $(k,j)\in\mathbb{Z}(N_{0})$ for all
$t\in(-\pi,\pi],$ where $\mathbb{Z}(s)$ is defined in (31). If $\gamma\in
R(M),$ where $R(M)$ is defined in Definition 2, then $\gamma$ encloses finite
number of the eigenvalues of \ $T_{t}^{(m)}.$ Thus, there exists a finite
subset $J$ of $\mathbb{Z}(N_{0})$ such that the eigenvalue $\lambda_{k,j}(t)$
lies inside $\gamma$ if and only if $(k,j)\in J.$ Moreover, by Remark 1,
$\lambda_{k,j}(t)$ for $(k,j)\in J$ is a simple eigenvalue. It is well-known
that the simple eigenvalues are the simple poles of the Green function of
$T_{t}^{(m)}$ and the projection $e(t,\gamma)$ has the form
\begin{equation}
e(t,\gamma)f=\sum_{(k,j)\in J}(f,X_{k,j,t})\Psi_{k,j,t}.
\end{equation}
Therefore the proof of the theorem follows from (33) and Definition 2.
\end{proof}

Now we are ready to consider the \textbf{Spectral Expansion for }$T^{(m)},$
when \textbf{Condition 1} or \textbf{Condition 2} holds. As we noted in the
introduction (see \textbf{(i}) and \textbf{(ii))} for the spectral expansion
we need to consider the integrals of the expression $a_{k}(t)\Psi
_{k,t}(x)=(f_{t},X_{k,t})\Psi_{k,t}(x)$ over $(-\pi,\pi]$ for almost all $x$
and the term by term integration of the series in (27). The functions
$\Psi_{k,t}(x)$ and $X_{k,t}(x)$ for each $x\in\lbrack0,1]$ are defined\ in
$\left(  (-\pi,\pi]\backslash A\right)  $ and in $(-\pi,\pi]$ for $k\leq
N_{1}$ and $k>N_{1}$ respectively, because the corresponding eigenvalue
$\lambda_{k}(t)$ is simple (see Remark 1 and the definition of $N_{1}$ in
(20)). Since $A$\ is a finite set the integrals over $(-\pi,\pi]$ and $\left(
(-\pi,\pi]\backslash A\right)  $ are the same. Now using Lemma 1 we prove the following

\begin{theorem}
If one of the \textbf{Condition 1} and \textbf{Condition 2} are satisfied,
then for each

$f$ $\in L_{2}^{m}(-\infty,\infty)$ the following equality holds
\begin{equation}
f(x)=\frac{1}{2\pi}\int\limits_{(-\pi,\pi]}%
{\displaystyle\sum\limits_{k\leq N_{1}}}
a_{k}(t)\Psi_{k,t}(x)dt+\frac{1}{2\pi}%
{\displaystyle\sum\limits_{k>N_{1}}}
\int\limits_{(-\pi,\pi]}a_{k}(t)\Psi_{k,t}(x)dt.
\end{equation}
The series in (46) converges in the norm of $L_{2}^{m}(a,b)$ for every
$a,b\in\mathbb{R}.$
\end{theorem}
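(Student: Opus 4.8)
The plan is to start from Gelfand's expansion (27), separate the $N_{1}$ low-index terms, and then justify term-by-term integration of the remaining tail by means of the uniform (in $t$) estimates of Lemma 1. By (21)--(27), for almost every $t\in(-\pi,\pi]$ one has $f_{t}=\sum_{k\in\mathbb{N}}a_{k}(t)\Psi_{k,t}$ in $L_{2}^{m}(0,1)$ with $a_{k}(t)=(f_{t},X_{k,t})$, and, since $f_{t}$ is $2\pi$-periodic in $t$, $\int_{(-\pi,\pi]}\Vert f_{t}\Vert^{2}\,dt=2\pi\Vert f\Vert^{2}$. I would put $g_{t}:=\sum_{k\leq N_{1}}a_{k}(t)\Psi_{k,t}$ and, for $l\geq N_{1}$, $R_{l}(\cdot,t):=\sum_{k>l}a_{k}(t)\Psi_{k,t}$, so that $f_{t}=g_{t}+R_{N_{1}}(\cdot,t)$. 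By Lemma 3.1 of [11] the indices $k>l$ correspond, via (20), to a cofinite subset $J_{l}$ of $\mathbb{Z}(N_{0})$ with $J_{l}\subseteq\mathbb{Z}(s_{l})$ and $s_{l}\to\infty$. Applying (33) to $f_{t}$ with $J=J_{l}$ gives $\Vert R_{l}(\cdot,t)\Vert^{2}\leq c\Vert f_{t}\Vert^{2}$ uniformly in $l\geq N_{1}$ and a.e. $t$ (hence $\Vert g_{t}\Vert\leq(1+\sqrt{c})\Vert f_{t}\Vert$), while applying (32) with $J=J_{l}$, $s=s_{l}$ gives
\[
\Vert R_{l}(\cdot,t)\Vert^{2}\leq c\Big(\sum_{(k,j)\in J_{l}}\mid(f_{t},e_{j}e^{i(2\pi k+t)x})\mid^{2}+\tfrac{1}{\sqrt{s_{l}}}\Vert f_{t}\Vert^{2}\Big),
\]
whose right-hand side tends to $0$ as $l\to\infty$ for a.e. $t$, the first sum being the tail of the Parseval sum of $f_{t}$ in the orthonormal basis $\{e_{j}e^{i(2\pi k+t)x}\}$.

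Next I would pass from $(0,1)$ to an arbitrary interval $(a,b)$. Every function occurring above is quasiperiodic, $h(x+1)=e^{it}h(x)$ with $\mid e^{it}\mid=1$, so $\Vert h\Vert_{(a,b)}\leq\sqrt{N_{a,b}}\,\Vert h\Vert_{(0,1)}$ with $N_{a,b}:=\lceil b\rceil-\lfloor a\rfloor$. Since $\int_{(-\pi,\pi]}\Vert f_{t}\Vert\,dt\leq\sqrt{2\pi}\big(\int_{(-\pi,\pi]}\Vert f_{t}\Vert^{2}\,dt\big)^{1/2}=2\pi\Vert f\Vert$, the $L_{2}^{m}(a,b)$-valued maps $t\mapsto g_{t}$ and $t\mapsto R_{N_{1}}(\cdot,t)$ have integrable norm (measurability in $t$ being routine from the analyticity in $t$ of the Riesz projections), so their Bochner integrals over $(-\pi,\pi]$ exist in $L_{2}^{m}(a,b)$ for every $a,b$; integrating $f_{t}=g_{t}+R_{N_{1}}(\cdot,t)$ and using (21) gives
\[
f=\frac{1}{2\pi}\int_{(-\pi,\pi]}g_{t}\,dt+\frac{1}{2\pi}\int_{(-\pi,\pi]}R_{N_{1}}(\cdot,t)\,dt .
\]
Here the first $N_{1}$ terms must be kept under a single integral sign: near a point of the finite exceptional set $A$ the functions $X_{k,t}$, hence $a_{k}(t)$, may be unbounded, whereas the block $g_{t}=f_{t}-R_{N_{1}}(\cdot,t)$ stays bounded by $(1+\sqrt{c})\Vert f_{t}\Vert$.

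It remains to identify the second integral with the series in (46). For $l>N_{1}$ I would write $R_{N_{1}}(\cdot,t)=\sum_{N_{1}<k\leq l}a_{k}(t)\Psi_{k,t}+R_{l}(\cdot,t)$; for $k>N_{1}$ the functions $X_{k,t}$ are uniformly bounded in $t$ by (16), so each $\int_{(-\pi,\pi]}a_{k}(t)\Psi_{k,t}\,dt$ exists in $L_{2}^{m}(a,b)$, and linearity of the Bochner integral over the finite sum yields
\[
\int_{(-\pi,\pi]}R_{N_{1}}(\cdot,t)\,dt=\sum_{N_{1}<k\leq l}\int_{(-\pi,\pi]}a_{k}(t)\Psi_{k,t}\,dt+\int_{(-\pi,\pi]}R_{l}(\cdot,t)\,dt .
\]
Finally, $\big\Vert\int_{(-\pi,\pi]}R_{l}(\cdot,t)\,dt\big\Vert_{(a,b)}\leq\sqrt{N_{a,b}}\int_{(-\pi,\pi]}\Vert R_{l}(\cdot,t)\Vert_{(0,1)}\,dt$, and since $\Vert R_{l}(\cdot,t)\Vert_{(0,1)}\to0$ a.e. and is dominated by the integrable function $\sqrt{c}\,\Vert f_{t}\Vert$, the dominated convergence theorem gives $\int_{(-\pi,\pi]}R_{l}(\cdot,t)\,dt\to0$ in $L_{2}^{m}(a,b)$; hence $\sum_{N_{1}<k\leq l}\int_{(-\pi,\pi]}a_{k}(t)\Psi_{k,t}\,dt$ converges to $\int_{(-\pi,\pi]}R_{N_{1}}(\cdot,t)\,dt$ in $L_{2}^{m}(a,b)$, which is precisely the asserted convergence of the series in (46), and together with the displayed identity for $f$ this establishes (46).

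The step I expect to be the main obstacle is this last one, the term-by-term integration of the tail uniformly over all $a,b$. It rests entirely on Lemma 1 supplying simultaneously an $l$-independent dominating function $\sqrt{c}\,\Vert f_{t}\Vert\in L_{1}(-\pi,\pi]$ (from (33)) and the pointwise-in-$t$ decay $\Vert R_{l}(\cdot,t)\Vert\to0$ coming from the $s^{-1/2}$ term in (32); the quasiperiodic extension then carries both facts from $L_{2}^{m}(0,1)$ to $L_{2}^{m}(a,b)$. The only other delicate point is that the $N_{1}$ low-index terms, which may individually blow up near the spectral singularities collected in $A$, must be integrated together as a single block rather than term by term.
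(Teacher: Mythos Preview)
Your argument is correct and follows essentially the same route as the paper: both proofs split off the tail $R_{l}(\cdot,t)$ for $l\geq N_{1}$, control it via Lemma~1, transfer the estimate from $(0,1)$ to a large interval by quasiperiodicity, and then justify the term-by-term integration. The only technical difference is that the paper integrates inequality~(32) over $t$ and uses the Parseval identity $\sum_{(k,j)}\int_{(-\pi,\pi]}\lvert(f_{t},e_{j}e^{i(2\pi k+t)x})\rvert^{2}\,dt=\int_{(-\pi,\pi]}\Vert f_{t}\Vert^{2}\,dt$ for $T^{(m)}(0_{m})$ to obtain $\iint\lvert R_{l}\rvert^{2}\,dx\,dt\to0$ and then applies Fubini and Cauchy--Schwarz, whereas you work with Bochner integrals, using the pointwise-in-$t$ decay from~(32) together with the $t$-uniform bound~(33) as a dominating function and invoking dominated convergence; both are short consequences of Lemma~1.
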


\begin{proof}
As in the proof of Lemma 1 we prove (46) when \textbf{Condition 1} holds and
without loss of generality assume that $r=0.$ The proof of the case when
\textbf{Condition 2} holds is the same. It follows from (21) that if $f$ $\in
L_{2}^{m}(-\infty,\infty)$ then $f_{t}$ $\in L_{2}^{m}(0,1)$ for almost all
$t.$ Therefore using (32) for \ $J=\mathbb{Z}(s)$ and $f=f_{t}$ we obtain
\begin{equation}
\parallel%
{\textstyle\sum\limits_{(k,j)\in\mathbb{N}(s)}}
(f_{t},X_{k,j,t})\Psi_{k,j,t}(x)\parallel^{2}\leq c\left(
{\textstyle\sum\limits_{(k,j)\in\mathbb{N}(s)}}
\mid(f_{t},e_{j}e^{i(2\pi k+t)x})\mid^{2}+\frac{\left\Vert f_{t}\right\Vert
^{2}}{\sqrt{s}}\right)
\end{equation}
for almost all $t.$ By (22)-(24),
\[%
{\textstyle\sum\limits_{(k,j)\in\mathbb{N}(s)}}
(f_{t},X_{k,j,t})\Psi_{k,j,t}(x+1)=e^{it}%
{\textstyle\sum\limits_{(k,j)\in\mathbb{N}(s)}}
(f_{t},X_{k,j,t})\Psi_{k,j,t}(x).
\]
and hence by (47) we have
\begin{equation}
\parallel%
{\textstyle\sum\limits_{(k,j)\in\mathbb{N}(s)}}
(f_{t},X_{k,j,t})\Psi_{k,j,t}\parallel_{(-p,p)}^{2}\leq2pc\left(
{\textstyle\sum\limits_{(k,j)\in\mathbb{N}(s)}}
\mid(f_{t},e_{j}e^{i(2\pi k+t)x})\mid^{2}+\frac{\left\Vert f_{t}\right\Vert
^{2}}{\sqrt{s}}\right)  .
\end{equation}
On the other hand by Parseval equality for $T^{(m)}(0_{m})$,
\[%
{\textstyle\sum\limits_{(k,j)}}
(%
{\textstyle\int\limits_{(-\pi,\pi]}}
\mid(f_{t},e_{j}e^{i(2\pi k+t)x})\mid^{2}dt)=%
{\textstyle\int\limits_{(-\pi,\pi]}}
\left\Vert f_{t}\right\Vert ^{2}dt.
\]
Therefore using (48), (21), (30) and the notation of \textbf{Lemma 3.1 of
[11]} we obtain
\begin{equation}
\int\limits_{(-\pi,\pi]}\int\limits_{(-p,p)}\mid R_{l}(x,t)\mid^{2}%
dxdt\rightarrow0
\end{equation}
as $l\rightarrow\infty.$ Thus by Fubini theorem $R_{l}(x,t)$ for $l\geq N_{1}$
is integrable with respect to $t$ for almost all $x.$ Now using the obvious
inequality%
\[
\mid\int_{(-\pi,\pi]}f(t)dt\mid^{2}\leq2\pi\int_{(-\pi,\pi]}\left\vert
f(t)\right\vert ^{2}dt,
\]
(30) and (49) we obtain%
\begin{equation}
\parallel\int\limits_{(-\pi,\pi]}\sum_{k>l}a_{k}(t)\Psi_{k,t}dt\parallel
_{(-p,p)}^{2}\leq2\pi\int\limits_{(-p,p)}\int\limits_{(-\pi,\pi]}\mid
\sum_{k>l}a_{k}(t)\Psi_{k,t}(x)\mid^{2}dtdx\rightarrow0
\end{equation}
as $l\rightarrow\infty$. Therefore%
\begin{equation}
\sum_{k>l}a_{k}(t)\Psi_{k,t}(x)\text{ }%
\end{equation}
for $l\geq N_{1}$ and hence $a_{k}(t)\Psi_{k,t}(x)$ for $k>N_{1}$ are
integrable and we have
\begin{equation}
\int\limits_{(-\pi,\pi]}\sum_{k>N_{1}}a_{k}(t)\Psi_{k,t}(x)dt=\sum_{k>N_{1}%
}\int\limits_{(-\pi,\pi]}a_{k}(t)\Psi_{k,t}(x)dt,
\end{equation}
where the last series converges in the norm of $L_{2}^{m}(-p,p)$ for every
$p\in\mathbb{N}.$\ The existence of the first integral in (46) follows from
(24) and the integrabilities of $f_{t}(x)$ and (51). Now using (27) we get the
proof of the theorem.
\end{proof}

To obtain the spectral expansion in term of $t$ we need to consider the
existence of
\begin{equation}
\int\nolimits_{(-\pi,\pi]}a_{k}(t)\Psi_{k,t}(x)dt
\end{equation}
for $k\leq N_{1}.$ To consider the existence of (53) we classify the spectral
singularity defined in Definition 1. It is well-known that [6] if $\lambda
_{k}(t)$ is the simple eigenvalues of $T_{t}^{(m)}$ then the projection
$e(t,\gamma)$ defined in Definition 1 has the form
\begin{equation}
e(t,\gamma)f=(f,X_{k,t})\Psi_{k,t}%
\end{equation}
where $\gamma$ contains inside only the eigenvalue $\lambda_{k}(t)$ of
$T_{t}^{(m)}$ . One can readily see%
\begin{equation}
\left\Vert e(t,\gamma)\right\Vert =\frac{1}{\left\vert \alpha_{k}%
(t)\right\vert }.
\end{equation}
Moreover, if $\lambda_{k}(t)$ is a simple eigenvalue then $\left\vert
\alpha_{k}(t)\right\vert \neq0$ and the function$\frac{1}{\left\vert
\alpha_{k}\right\vert }$ is continuous in some neighborhood of $t.$ Therefore,
it follows from (55) and Definitions 1 that the set of spectral singularities
is the subset of the set of the multiple eigenvalues $a_{1},a_{2},...a_{p}$
defined in the introduction (see (19)). Thus there are at most finite number
of spectral singularities denoted by $a_{1},a_{2},...a_{s},$ where $s\leq p,$
if \textbf{Condition 1} or \textbf{Condition 2} holds.

By (55) and Definition 1 for $j\leq s$ there exist $t_{0}\in A_{j},$ where
$A_{j}$ is defined in (19), $k\in\mathbb{N}$ and a sequence $\left\{
t_{n}\right\}  $ such that $\lambda_{k}(t_{0})=a_{j},$ $t_{n}\rightarrow
t_{0}$ and $\frac{1}{\left\vert \alpha_{k}(t_{n})\right\vert }\rightarrow
\infty$ as $n\rightarrow\infty,$ that is, $\frac{1}{\left\vert \alpha
_{k}\right\vert }$ is unbounded in any deleted neighborhood $U$ of $t_{0}.$
$\ $If $\lambda_{k}(t_{0})$ is not a spectral singularity then for some
deleted neighborhood $U$ of $t_{0}$ we have
\begin{equation}
\sup_{t\in U}\frac{1}{\left\vert \alpha_{k}(t)\right\vert }<\infty.
\end{equation}
Thus the boundlessness of $\frac{1}{\alpha_{n}}$ is the characterization of
the spectral singularities. The considerations of the spectral singularities
play the crucial rule for the investigations of the spectrality of $T^{(m)}$.
However our aim is the construction the spectral expansion and by Theorem 2
the spectral expansion is connected with the existence of (53) for all $k.$ If
(56) holds, then arguing as above one can readily see that
\begin{equation}
\int\limits_{U}\int\limits_{(0,1)}\left\vert a_{k}(t)\Psi_{k,t}(x)\right\vert
^{2}dxdt\leq\sup_{t\in U}\frac{1}{\left\vert \alpha_{k}(t)\right\vert }%
\int\limits_{U}\int\limits_{(0,1)}\left\vert f_{t}(x)\right\vert
^{2}dxdt<\infty
\end{equation}
Therefore by Fubini theorem the integral
\begin{equation}
\int\nolimits_{U}\frac{1}{\alpha_{k}(t)}(f,\Psi_{k,t}^{\ast})_{\mathbb{R}}%
\Psi_{k,t}(x)dt
\end{equation}
exists for almost all $x$ if $\lambda_{k}(t_{0})$ is not a spectral
singularity. In general, the converse statement is not true, since $\frac
{1}{\alpha_{n}}$ may have an integrable boundlessness, and the integral (58)
may exists when $\lambda_{k}(t_{0})$ is a spectral singularities. Hence to
construct the spectral expansion for the operator $T^{(m)}$ we need to
introduce a new concept connected with the existence of the integral (58) for
$U\subset(-\pi,\pi]$. Therefore we introduce the following notions for the
construction of the spectral expansion. Note that everywhere the integral over
$U$ denotes the integral over $U\backslash A.$

\begin{definition}
Spectral singularity $\lambda_{0}$ is said to be an essential spectral
singularity (ESS) of $T^{(m)}$ if there exist $k\in\mathbb{N}$ , $t_{0}%
\in(-\pi,\pi]$ and $f\in L_{2}^{m}(-\infty,\infty)$ such that $\lambda
_{0}=\lambda_{k}(t_{0})$ and for each $\varepsilon$ the expression $\frac
{1}{\alpha_{k}(t)}(f,\Psi_{k,t}^{\ast})_{\mathbb{R}}\Psi_{k,t}(x)$ for almost
all $x$ is not integrable on $(t_{0}-\varepsilon,-t_{0}+\varepsilon)$. In this
case $t_{0}$ is called a singular quasimomentum.
\end{definition}

Let $\mathbb{S}$ \ be the set of all $k\in\mathbb{N}$ such that $\Gamma
_{k}=\left\{  \lambda_{k}(t):t\in(-\pi,\pi]\right\}  $ contains ESS. It
follows from Definition 3 that, if $k\notin\mathbb{S}$ then the integral (53)
exist. Therefore Theorem 2 can be written in the form.

\begin{theorem}
If one of the \textbf{Condition 1} and \textbf{Condition 2} are satisfied,
then for each

$f$ $\in L_{2}^{m}(-\infty,\infty)$ the equality
\begin{equation}
f(x)=\frac{1}{2\pi}\int\limits_{(-\pi,\pi]}%
{\displaystyle\sum\limits_{k\in\mathbb{S}}}
a_{k}(t)\Psi_{k,t}(x)dt+\frac{1}{2\pi}%
{\displaystyle\sum\limits_{k\in\mathbb{N}\backslash\mathbb{S}}}
\int\limits_{(-\pi,\pi]}a_{k}(t)\Psi_{k,t}(x)dt
\end{equation}
holds. The series in (59) converges in the norm of $L_{2}^{m}(a,b)$ for every
$a,b\in\mathbb{R}.$ In particular, if $T^{(m)}$ has no ESS, then
\begin{equation}
f(x)=\frac{1}{2\pi}\sum_{k=1}^{\infty}\int_{0}^{2\pi}a_{k}(t)\Psi_{k,t}(x)dt.
\end{equation}

\end{theorem}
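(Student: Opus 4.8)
The plan is to show that Theorem 4 is essentially a rewriting of Theorem 2, obtained by regrouping the finitely many low-index terms $k\le N_1$ according to whether or not $\Gamma_k$ contains an ESS. First I would recall from Theorem 2 the decomposition
\[
f(x)=\frac{1}{2\pi}\int_{(-\pi,\pi]}\sum_{k\le N_1}a_k(t)\Psi_{k,t}(x)\,dt+\frac{1}{2\pi}\sum_{k>N_1}\int_{(-\pi,\pi]}a_k(t)\Psi_{k,t}(x)\,dt,
\]
together with the fact already established in the excerpt that the resolvent set / simple eigenvalue structure forces the set of spectral singularities, and a fortiori the set of ESS, to be a subset of the finitely many multiple eigenvalues $a_1,\dots,a_p$. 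Hence $\mathbb{S}\subset\{1,2,\dots,N_1\}$ is a finite set: indeed if $k>N_1$ then $\lambda_k(t)$ is simple for all $t\in(-\pi,\pi]\setminus A$, so $\Gamma_k$ cannot contain a spectral singularity, let alone an ESS. This is the key bookkeeping observation.

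Next I would handle the indices $k\in\{1,\dots,N_1\}\setminus\mathbb{S}$. For each such $k$, by definition of $\mathbb{S}$ the curve $\Gamma_k$ contains no ESS, so for every $t_0\in(-\pi,\pi]$ the expression $\frac{1}{\alpha_k(t)}(f,\Psi_{k,t}^{\ast})_{\mathbb{R}}\Psi_{k,t}(x)=a_k(t)\Psi_{k,t}(x)$ is integrable near $t_0$ for almost all $x$; covering the compact $(-\pi,\pi]$ by finitely many such neighborhoods (and discarding the finite set $A$, over which the integrals are unaffected) gives that $\int_{(-\pi,\pi]}a_k(t)\Psi_{k,t}(x)\,dt$ exists for a.e.\ $x$. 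So for $k\notin\mathbb{S}$ the single term $a_k(t)\Psi_{k,t}(x)$ may legitimately be pulled out of the sum-under-the-integral and written as its own integrated term $\int_{(-\pi,\pi]}a_k(t)\Psi_{k,t}(x)\,dt$. Moving precisely these finitely many terms from the first group of Theorem 2 into the second group, and leaving the terms with $k\in\mathbb{S}$ inside the integral, yields exactly (59); since we have only rearranged a finite number of terms among two groups, the convergence of the series over $k>N_1$ in the $L_2^m(a,b)$ norm is inherited from Theorem 2, and therefore the series in (59) over $k\in\mathbb{N}\setminus\mathbb{S}$ also converges in $L_2^m(a,b)$ for all $a,b\in\mathbb{R}$.

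Finally, for the special case (60): if $T^{(m)}$ has no ESS then $\mathbb{S}=\emptyset$, the first (integral-of-sum) term in (59) disappears entirely, and (59) collapses to $f(x)=\frac{1}{2\pi}\sum_{k\in\mathbb{N}}\int_{(-\pi,\pi]}a_k(t)\Psi_{k,t}(x)\,dt$; replacing the interval of integration $(-\pi,\pi]$ by $[0,2\pi)$ (equivalently $\int_0^{2\pi}$), which is harmless by $2\pi$-periodicity of the integrand in $t$ coming from (22)–(23), gives (60). The only point requiring genuine care — and the main obstacle — is the interchange justification for the $k\notin\mathbb{S}$ terms: one must be sure that ``not an ESS'' really delivers local $t$-integrability of $a_k(t)\Psi_{k,t}(x)$ for a.e.\ $x$, not merely the existence of the vector integral (58) for some particular $f$. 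This is exactly what Definition 3 is engineered to guarantee (it quantifies over all $f\in L_2^m(-\infty,\infty)$), so the argument goes through, but it is the step where the precise wording of the definition is doing the real work; everything else is finite rearrangement plus the already-proven Theorem 2.
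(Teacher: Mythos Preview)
Your proposal is correct and follows essentially the same approach as the paper: the paper's entire argument is the single sentence ``It follows from Definition 3 that, if $k\notin\mathbb{S}$ then the integral (53) exists. Therefore Theorem 2 can be written in the form [of Theorem 3],'' and your proof is a careful unpacking of exactly this, including the observation that $\mathbb{S}\subset\{1,\dots,N_1\}$ and the compactness/covering argument to pass from local to global integrability for $k\notin\mathbb{S}$. Your explicit attention to the quantifier structure of Definition 3 (the ``for all $f$'' clause) is more thorough than what the paper spells out.
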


Let $\mathbb{E}=:\left\{  t_{1},t_{2},...,t_{s}\right\}  ,$ where $-\pi
<t_{1}<t_{2}<...<t_{s}\leq\pi,$ be the set of all singular quasimomenta of
$T^{(m)}.$ \ Define $I(\delta)$ by
\begin{equation}
I(\delta)=(-\pi,\pi]\backslash%
{\textstyle\bigcup\limits_{j=1}^{s}}
(t_{j}-\delta,t_{j}+\delta),
\end{equation}
where $\delta<\frac{1}{2}\min_{j}\left\{  t_{1}+\pi,t_{j}-t_{j-1}%
:j=2,3,...,s\right\}  ,$ that is, the intervals $(t_{j}-\delta,t_{j}+\delta)$
for $j=2,3,...,s$ are pairwise disjoint.

\begin{theorem}
If one of the \textbf{Condition 1} and \textbf{Condition 2} are satisfied,
then for each

$f\in L_{2}^{m}(-\infty,\infty)$ the following spectral expansion holds
\begin{equation}
f(x)=\frac{1}{2\pi}\lim_{\delta\rightarrow0}\left(  \sum_{k\in\mathbb{S}}%
\int\limits_{I(\delta)}a_{k}(t)\Psi_{k,t}(x)dt\right)  +\frac{1}{2\pi}%
\sum_{k\in\mathbb{N}\backslash\mathbb{S}}\int_{0}^{2\pi}a_{k}(t)\Psi
_{k,t}(x)dt,
\end{equation}
where the series converges in the norm of $L_{2}^{m}(a,b)$ for every
$a,b\in\mathbb{R}.$
\end{theorem}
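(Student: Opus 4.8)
The plan is to derive (62) from Theorem 4 by isolating the finite part $\sum_{k\in\mathbb{S}}$ of the expansion (59). First note that the tail is already in final form: the integrand $a_{k}(t)\Psi_{k,t}$ is $2\pi$-periodic in $t$, so its integrals over $(-\pi,\pi]$ and over $[0,2\pi]$ coincide, whence the second term of (62) equals the second term of (59), and the series there converges in $L_{2}^{m}(a,b)$ by Theorem 4. Thus everything reduces to proving
\[
\lim_{\delta\rightarrow0}\ \sum_{k\in\mathbb{S}}\int_{I(\delta)}a_{k}(t)\Psi_{k,t}(x)\,dt\ =\ \int_{(-\pi,\pi]}\ \sum_{k\in\mathbb{S}}a_{k}(t)\Psi_{k,t}(x)\,dt
\]
in $L_{2}^{m}(a,b)$, after which (62) follows by substitution into (59). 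I will use two structural facts. (1) The set $\mathbb{S}$ is finite, in fact $\mathbb{S}\subseteq\{1,\ldots,N_{1}\}$: for $k>N_{1}$ the eigenvalue $\lambda_{k}(t)$ is simple for every $t\in(-\pi,\pi]$ (Remark 1 and (20)), hence $1/|\alpha_{k}|$ is continuous on $(-\pi,\pi]$ and, by the discussion around (55)--(56) (a spectral singularity is necessarily one of the finitely many multiple eigenvalues $a_{1},\ldots,a_{p}$), the curve $\Gamma_{k}$ carries no spectral singularity and a fortiori no ESS, so $k\notin\mathbb{S}$. (2) The map $G(t):=\sum_{k\in\mathbb{S}}a_{k}(t)\Psi_{k,t}(\cdot)$ is Bochner integrable on $(-\pi,\pi]$ with values in $L_{2}^{m}(a,b)$; this is precisely the assertion that the first term of (59) is a well-defined element of $L_{2}^{m}(a,b)$, and it follows, as in the passage from (46) to (59), by subtracting the finitely many (individually integrable, since $k\notin\mathbb{S}$) terms $a_{k}\Psi_{k,t}$ with $k\leq N_{1}$, $k\notin\mathbb{S}$, from $\sum_{k\leq N_{1}}a_{k}(t)\Psi_{k,t}(\cdot)=f_{t}-\sum_{k>N_{1}}a_{k}(t)\Psi_{k,t}(\cdot)$, which lies in $L^{2}((-\pi,\pi];L_{2}^{m}(a,b))$ by (21) and (49)--(51).

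Now fix $\delta>0$. The first step is the term-by-term identity $\sum_{k\in\mathbb{S}}\int_{I(\delta)}a_{k}(t)\Psi_{k,t}(\cdot)\,dt=\int_{I(\delta)}G(t)\,dt$ in $L_{2}^{m}(a,b)$. As $\mathbb{S}$ is finite, it is enough to show that for each $k\in\mathbb{S}$ and almost every $x$ the function $t\mapsto a_{k}(t)\Psi_{k,t}(x)$ is Lebesgue integrable on $I(\delta)$; the identity then holds pointwise in $x$ by finite additivity of the integral, and its right-hand side is in $L_{2}^{m}(a,b)$ by fact (2). To see the local integrability, recall that $a_{k}(t)\Psi_{k,t}(x)=\tfrac{1}{\alpha_{k}(t)}(f,\Psi_{k,t}^{\ast})_{\mathbb{R}}\Psi_{k,t}(x)$, which is exactly the expression appearing in Definition 3 for our $f$. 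Fix $t_{0}\in I(\delta)$. Since $t_{0}$ is not a singular quasimomentum, Definition 3 applies in each of the three cases for $\lambda_{k}(t_{0})$ (if $\lambda_{k}(t_{0})$ is not a spectral singularity, (56)--(57) apply on a neighborhood of $t_{0}$; if it is a non-essential spectral singularity, Definition 3 gives integrability near $t_{0}$; and if it is an ESS, the fact that $t_{0}$ is not a singular quasimomentum still forces integrability near $t_{0}$), so $t\mapsto a_{k}(t)\Psi_{k,t}(x)$ is integrable on a neighborhood of $t_{0}$ for a.e. $x$. Covering the compact closure of $I(\delta)$ (with $t$ identified with $t+2\pi$) by finitely many such neighborhoods and using the quasi-periodicity (23) gives the required integrability on $I(\delta)$.

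The second step is the passage $\delta\rightarrow0$. By the first step and fact (2),
\[
\int_{(-\pi,\pi]}G(t)\,dt-\int_{I(\delta)}G(t)\,dt=\int_{(-\pi,\pi]\backslash I(\delta)}G(t)\,dt,\qquad \big|(-\pi,\pi]\backslash I(\delta)\big|\leq 2s\delta,
\]
because $(-\pi,\pi]\backslash I(\delta)=\bigcup_{j=1}^{s}(t_{j}-\delta,t_{j}+\delta)$. Since $G\in L^{1}((-\pi,\pi];L_{2}^{m}(a,b))$, absolute continuity of the Bochner integral gives
\[
\Big\|\int_{(-\pi,\pi]\backslash I(\delta)}G(t)\,dt\Big\|_{(a,b)}\leq\int_{(-\pi,\pi]\backslash I(\delta)}\|G(t)\|_{(a,b)}\,dt\longrightarrow0\qquad(\delta\rightarrow0).
\]
Hence $\lim_{\delta\rightarrow0}\sum_{k\in\mathbb{S}}\int_{I(\delta)}a_{k}\Psi_{k,t}\,dt=\int_{(-\pi,\pi]}\sum_{k\in\mathbb{S}}a_{k}\Psi_{k,t}\,dt$ in $L_{2}^{m}(a,b)$; inserting this into (59) proves (62), and the stated $L_{2}^{m}(a,b)$-convergence of the $k\in\mathbb{N}\backslash\mathbb{S}$ series is that of (59).

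I expect the main obstacle to be the local integrability claim in the first step: away from the singular quasimomenta one must still control $\tfrac{1}{\alpha_{k}(t)}(f,\Psi_{k,t}^{\ast})_{\mathbb{R}}\Psi_{k,t}$ near a possible non-essential spectral singularity lying on $\Gamma_{k}$, showing that the unboundedness of $1/|\alpha_{k}|$ there is only an integrable one, and then glue the local estimates over the compact set $I(\delta)$; carrying the product estimate (57) through this, and reconciling it with the merely pointwise-in-$x$ integrability provided by Definition 3, is the delicate part. Everything else — the finiteness of $\mathbb{S}$, the Bochner integrability of $G$ (inherited from Theorem 2 and Theorem 4), and the absolute-continuity limit — is routine once Theorem 4 is granted.
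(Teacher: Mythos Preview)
Your proof is correct and follows essentially the same route as the paper's: the paper's three-line argument records exactly your two ingredients---integrability of $\sum_{k\in\mathbb{S}}a_{k}(t)\Psi_{k,t}$ over $(-\pi,\pi]$ (giving the vanishing of the integrals over $(t_{j}-\delta,t_{j}+\delta)$, your ``second step'') and the existence of $\int_{I(\delta)}a_{k}(t)\Psi_{k,t}\,dt$ for each $k\in\mathbb{S}$ from Definition~3 (your ``first step'')---and then reads (62) off (59). Your version is more explicit about the finiteness of $\mathbb{S}\subset\{1,\ldots,N_{1}\}$, about the Bochner/$L_{2}^{m}(a,b)$ framing of the limit, and about the compactness argument behind the local integrability on $I(\delta)$; the paper leaves all of this implicit, but the logical skeleton is the same.
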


\begin{proof}
Since
\begin{equation}%
{\displaystyle\sum\limits_{k\in\mathbb{S}}}
a_{k}(t)\Psi_{k,t}(x)
\end{equation}
is integrable over $(-\pi,\pi],$ we have%
\begin{equation}
\lim_{\delta\rightarrow0}\int\limits_{(t_{j}-\delta,t_{j}+\delta)}%
{\displaystyle\sum\limits_{k\in\mathbb{S}}}
a_{k}(t)\Psi_{k,t}(x)dt=0
\end{equation}
for $j=1,2,...,s.$ On the other hand it follows from Definition 3 that
\begin{equation}
\int\limits_{I(\delta)}a_{k}(t)\Psi_{k,t}(x)dt
\end{equation}
exists for all $k\in\mathbb{S}.$ Therefore (62) follows from (59).
\end{proof}

\begin{remark}
By Definition 3 the elements $a_{k}(t)\Psi_{k,t}$ of the set $\left\{
a_{k}(t)\Psi_{k,t}(x):k\in\mathbb{S}\right\}  $ are not integrable on
$(-\pi,\pi].$ The sum of elements of this set is integrable (see (59)) due to
the cancellations of the singular parts of the nonintegrable elements. Thus,
to obtain the spectral expansion (see Theorems 3 and 4) we huddle together the
nonintegrable on $(-\pi,\pi]$ elements. The following example shows that, in
the general case, \ for the considerations of the integrals over $(-\pi,\pi]$
the huddling over $\mathbb{S}$ (see (59) and (62)) is necessary and one can
not divide the set $\mathbb{S}$ into two disjoint subset $\mathbb{S}_{1}$ and
$\mathbb{S}_{2}$ such that the summations
\begin{equation}%
{\displaystyle\sum\limits_{k\in\mathbb{S}_{1}}}
a_{k}(t)\Psi_{k,t}\text{ }\And\text{ }%
{\displaystyle\sum\limits_{k\in\mathbb{S}_{2}}}
a_{k}(t)\Psi_{k,t}%
\end{equation}
are integrable over $(-\pi,\pi]$.
\end{remark}

\begin{example}
For the singular quasimomentum $t_{i}$ denote by $\mathbb{S}(i)$ the set of
all $k$ for which $a_{k}(t)\Psi_{k,t}$ is nonintegrable over the set
$(t_{i}-\delta,t_{i}+\delta).$ It is clear that $\mathbb{S}=\cup
_{i=1,2,...,s}\mathbb{S}(i).$ Let $\mathbb{S}(1)=\left\{  1,2\right\}  ,$
$\mathbb{S}(2)=\left\{  2,3\right\}  ,...,\mathbb{S}(s)=\left\{
s,s+1\right\}  .$ Then $\mathbb{S}=\left\{  1,2,...,s+1\right\}  $ and
$a_{k}(t)\Psi_{k,t}$ is nonintegrable over $(-\pi,\pi]$ for $k=1,2,...,s+1.$
If $s=2$ then it is clear that for any proper subset $\mathbb{S}_{1}$ of
$\mathbb{S}$ the summations in (66), where $\mathbb{S}_{2}=\mathbb{S}%
\backslash\mathbb{S}_{1}$ are not integrable over $(-\pi,\pi].$ This statement
for arbitrary $s$ can be proved by induction method.
\end{example}

\begin{remark}
If we consider the integral over $(-\pi,\pi]$ as sum of integrals over
$I(\delta)$ and

$(t_{j}-\delta,t_{j}+\delta)$ for $j=1,2,...,s,$ where $I(\delta)$ is defined
in (61), then the first integral in (59) can be written in the form
\begin{equation}
\int\limits_{(-\pi,\pi]}%
{\displaystyle\sum\limits_{k\in\mathbb{S}}}
a_{k}(t)\Psi_{k,t}dt=\sum_{k\in\mathbb{S}}\int\limits_{I(\delta)}a_{k}%
(t)\Psi_{k,t}dt+%
{\displaystyle\sum\limits_{i=1}^{s}}
\int\limits_{(t_{i}-\delta,t_{i}+\delta)}%
{\displaystyle\sum\limits_{k\in\mathbb{S}}}
a_{k}(t)\Psi_{k,t}dt.
\end{equation}
By the definition of $\mathbb{S}(i)$ (see Example 1) we have
\begin{equation}
\int\limits_{(t_{j}-\delta,t_{j}+\delta)}%
{\displaystyle\sum\limits_{k\in\mathbb{S}}}
a_{k}(t)\Psi_{k,t}dt=%
{\displaystyle\sum\limits_{k\in\mathbb{S}\backslash\mathbb{S}(i)}}
\int\limits_{(t_{j}-\delta,t_{j}+\delta)}a_{k}(t)\Psi_{k,t}dt+\int
\limits_{(t_{j}-\delta,t_{j}+\delta)}%
{\displaystyle\sum\limits_{k\in\mathbb{S}(i)}}
a_{k}(t)\Psi_{k,t}dt.
\end{equation}
For each singular quasimomentum $t_{i\text{ }}\in\mathbb{E}$ denote by
$\Lambda_{1}(t_{i}),$ $\Lambda_{2}(t_{i}),...,\Lambda_{s_{i}}(t_{i})$ the
different ESS of $T^{(m)}$ lying in $\sigma(T_{t_{i}}^{(m)})$ and put
$\mathbb{S}(i,j)=:\left\{  k\in\mathbb{S(}i\mathbb{)}:\lambda_{k}%
(t_{i})=\Lambda_{j}(t_{i})\right\}  .$ Thus the set of ESS is $\left\{
\Lambda_{j}(t_{i}):\text{ }i=1,2,...,s;\text{ }j=1,2,...,s_{i}\right\}  .$ It
is clear that $\mathbb{S}(i,j)\cap\mathbb{S}(i,v)=\emptyset$ for $j\neq v.$
Hence the set $\mathbb{S(}i\mathbb{)}$ can be divided into pairwise disjoint
subsets $\mathbb{S}(i,j)$ for $j=1,2,...,s_{i}.$\ On the other hand, using
(45) and the well-known argument of the general perturbation theory in finite
dimensional spaces (see Chapter 2 of [3]) one can prove that
\[
e(t,\gamma)f_{t}(x)=\sum_{k=\mathbb{S}(i,j)}(f_{t},X_{k,t})\Psi_{k,t}%
(x),\text{ }\forall t\in(t_{i}-\delta,t_{i}+\delta)\backslash\{t_{j}\}
\]
and $e(t,\gamma)f_{t}(x)$ is integrable in $(t_{i}-\delta,t_{i}+\delta)$ for
almost all $x,$ where $e(t,\gamma)$ \ is the projection defined in Definition
1$,$ $\delta$ is a sufficiently small number and $\gamma$ contains inside only
the eigenvalues $\lambda_{k}(t)$ for $k=\mathbb{S}(i,j)$ of $T_{t}^{(m)}$ for
$t\in(t_{i}-\delta,t_{i}+\delta).$ Therefore the summations over
$\mathbb{S}(i)$ in (68) can be written as the sum of summations over
$\mathbb{S}(i,j)$ for $j=1,2,...,s_{i}:$
\begin{equation}
\int\limits_{(t_{j}-\delta,t_{j}+\delta)}%
{\displaystyle\sum\limits_{k\in\mathbb{S}(i)}}
a_{k}(t)\Psi_{k,t}dt=\sum\limits_{j=1}^{s_{i}}\left(  \int\limits_{(t_{j}%
-\delta,t_{j}+\delta)}%
{\displaystyle\sum\limits_{k\in\mathbb{S}(i,j)}}
a_{k}(t)\Psi_{k,t}dt\right)  .
\end{equation}
We say that the set $\left\{  a_{k}(t)\Psi_{k,t}(x):k\in\mathbb{S}%
(i,j)\right\}  $ is a bundle corresponding to the ESS $\Lambda_{j}(t_{i}).$
For almost all $x$ the total sum $S(x,t)$ of elements of the bundle is an
integrable function in $(t_{i}-\delta,t_{i}+\delta)$. However, each element of
the bundle is nonintegrable over $(t_{i}-\delta,t_{i}+\delta)$ and we huddle
together the nonintegrable elements. Hence \ for the considerations of the
integrals over $(t_{i}-\delta,t_{i}+\delta)$ the huddling over $\mathbb{S}%
(i,j)$ is necessary. Thus in any case the huddling in the spectral expansion
of $T^{(m)}$ is necessary if it has the ESS. Using (67)-(69) one can minimize
the number of terms in the huddling in the spectral expansion. In theorems 3
and 4 to avoid the complicated notations, we prefer to use the integrals over
$(-\pi,\pi]$ and hence the summations (huddling) over $\mathbb{S}$.
\end{remark}

\end{document}